\newtheorem{theorem}{Theorem}
\theoremstyle{plain}
\newtheorem{conjecture}{Conjecture}
\newtheorem{corollary}{Corollary}
\newtheorem{lemma}{Lemma}
\numberwithin{equation}{section}
\begin{document}
\title{A Rigidity Theorem for the Hemi-Sphere}
\author{Fengbo Hang}
\address{Courant Institute, 251 Mercer Street, New York, NY 10012}
\email{fengbo@cims.nyu.edu}
\author{Xiaodong Wang}
\address{Department of Mathematics, Michigan State University, East Lansing, MI 48864}
\email{xwang@math.msu.edu}
\maketitle

\section{\bigskip Introduction}

In this paper we prove the following rigidity theorem.

\begin{theorem}
\label{main}Let $\left(  M^{n},g\right)  $ ($n\geq2$) be a compact Riemannian
manifold with nonempty boundary $\Sigma=\partial M$. Suppose

\begin{itemize}
\item \textrm{$Ric$}$\geq\left(  n-1\right)  g,$

\item $\left(  \Sigma,g|_{\Sigma}\right)  $ is isometric to the standard
sphere $\mathbb{S}^{n-1}\subset\mathbb{R}^{n}$,

\item $\Sigma$ is convex in $M$ in the sense that its second fundamental form
is nonnegative.
\end{itemize}

Then $\left(  M^{n},g\right)  $ is isometric to the hemisphere $\mathbb{S}%
_{+}^{n}\subset\mathbb{R}^{n+1}$.
\end{theorem}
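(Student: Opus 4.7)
My plan is to carry out a Reilly-formula argument in the spirit of Obata's rigidity theorem for the round sphere, adapted to the boundary setting. Let $\phi_1,\dots,\phi_n$ denote the restrictions to $\Sigma=\mathbb{S}^{n-1}$ of the $n$ Euclidean coordinate functions on $\mathbb{R}^n$; they satisfy $\Delta_\Sigma\phi_i=-(n-1)\phi_i$, $\sum_i\phi_i^2=1$, and the useful tensorial identity $\sum_i \mathrm{II}(\nabla^\Sigma\phi_i,\nabla^\Sigma\phi_i)=H$ for any symmetric bilinear form $\mathrm{II}$ on $\Sigma$ (where $H$ is its trace). These identities mirror the behavior of the ambient coordinates $x^1,\dots,x^n$ on the model hemisphere $\mathbb{S}^n_+\subset\mathbb{R}^{n+1}$, where each $x^i$ satisfies $\Delta x^i+nx^i=0$ with vanishing outward normal derivative along the equator. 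For each $i$, I will solve the Dirichlet problem
\[
\Delta u_i + n u_i = 0 \text{ in } M,\qquad u_i|_\Sigma=\phi_i,
\]
(addressing the possible resonance $n\in\sigma_{\mathrm{Dir}}(M)$ by a Fredholm/perturbation argument) and set $U=(u_1,\dots,u_n):M\to\mathbb{R}^n$, $F=|U|^2$.

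Applying Reilly's formula to each $u_i$, summing over $i$, and combining with the pointwise Lichnerowicz bound $|\nabla^2 u_i|^2\geq(\Delta u_i)^2/n=nu_i^2$, the hypothesis $\mathrm{Ric}\geq(n-1)g$, and the convexity $\mathrm{II}\geq 0$, a direct computation yields the identity
\[
\sum_i\int_M \lvert \nabla^2 u_i+u_i g\rvert^2+\int_M\bigl[\mathrm{Ric}(DU,DU)-(n-1)|DU|^2\bigr]+\int_\Sigma H\bigl(|\partial_\nu U|^2+1\bigr)=\tfrac{n-1}{2}\int_\Sigma\partial_\nu F.
\]
Each term on the left is nonnegative under the hypotheses. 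If equality holds throughout, I obtain (i) $\nabla^2 u_i=-u_i g$ for every $i$ (the Obata equation), (ii) $H\equiv 0$ and hence $\mathrm{II}\equiv 0$ on $\Sigma$, and (iii) $\mathrm{Ric}\equiv(n-1)g$ on the span of $\nabla u_1,\dots,\nabla u_n$. The Obata equations then allow me to construct an isometry $M\to\mathbb{S}^n_+\subset\mathbb{R}^{n+1}$ by adjoining $u_0:=\sqrt{1-F}$ to $U$ and verifying that $(u_0,U)$ is a local isometry onto the hemisphere, with the boundary data pinning it down globally.

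The main obstacle is forcing equality in the identity above: the computation itself only yields the one-sided bound $\int_\Sigma\partial_\nu F\geq 0$, and I need a complementary upper bound. My plan is to show $F\leq 1$ throughout $M$ by a maximum-principle analysis of the elliptic equation $\Delta F=2(|DU|^2-nF)$ with $F|_\Sigma=1$, and then to extract from this — together with either an auxiliary Reilly-type computation (applied to a solution modeled on the hemisphere coordinate $x^0$) or a Heintze--Karcher-type volume comparison — the reverse inequality $\int_\Sigma\partial_\nu F\leq 0$. Supplying this complementary estimate, and subsequently verifying that the Obata functions assemble into a global isometry with $\mathbb{S}^n_+$, will be the principal technical hurdles.
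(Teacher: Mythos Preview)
Your setup---extending the coordinate functions of $\mathbb{S}^{n-1}$ to solutions of $\Delta u_i+nu_i=0$ and aiming for the Obata equation $\nabla^2 u_i=-u_i g$---is exactly the right framework, and it is also the paper's starting point. The gap is precisely where you locate it, but your proposed fix does not close it. Suppose you do manage to prove $F\le 1$ on $M$; since $F|_\Sigma=1$, the boundary is then the maximum set, and the Hopf-type conclusion is $\partial_\nu F\ge 0$ on $\Sigma$, the \emph{same} sign as your Reilly inequality, not the complementary one. (In the model hemisphere $F=1-(x^0)^2\le 1$ and $\partial_\nu F=0$ on the equator, consistent with this.) Moreover, the equation $\Delta F=2(|DU|^2-nF)$ has no evident maximum-principle structure forcing $F\le 1$: at an interior maximum you only learn $|DU|^2\le nF$, which is not a contradiction. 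The vaguer alternatives you mention (an auxiliary ``$x^0$'' function, Heintze--Karcher) are not obviously adequate either; the first runs into the same resonance/sign issues, and the second controls volume rather than $\partial_\nu F$.

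The paper avoids the integral bottleneck by a pointwise argument applied to a \emph{single} extension $u=u_\alpha$. The decisive quantity is not $F=\sum u_i^2$ but $\phi=|\nabla u|^2+u^2$, which the Bochner formula shows is \emph{subharmonic} under $\mathrm{Ric}\ge(n-1)g$ and $\Delta u=-nu$. Thus $\phi$ attains its maximum on $\Sigma$, where $\phi=1+\chi^2$ with $\chi=\partial_\nu u$. A short boundary computation gives $\tfrac12\partial_\nu\phi=\langle\nabla_\Sigma f,\nabla_\Sigma\chi\rangle-H\chi^2-\Pi(\nabla_\Sigma f,\nabla_\Sigma f)$. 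At a boundary maximum $p$ one has $\nabla_\Sigma(\chi^2)(p)=0$, hence either $\chi(p)=0$ (forcing $\chi\equiv 0$) or $\nabla_\Sigma\chi(p)=0$; in either case the cross term vanishes and convexity makes $\partial_\nu\phi(p)\le 0$, so by Hopf's lemma $\phi$ is constant and $\nabla^2 u=-ug$. This pointwise trick is what replaces your missing reverse inequality. Varying $\alpha\in\mathbb{S}^{n-1}$ then yields $\Pi\equiv 0$, and an intermediate-value argument produces a choice with $\chi\equiv 0$, from which the isometry with $\mathbb{S}^n_+$ follows. (A minor point: the resonance case $n\in\sigma_{\mathrm{Dir}}$ needs no Fredholm maneuver---by Reilly's eigenvalue theorem $\lambda_1\ge n$, and $\lambda_1=n$ already forces $M\cong\mathbb{S}^n_+$; so one may assume $\lambda_1>n$ and solve the Dirichlet problem directly.)
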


\bigskip\bigskip

It may be necessary to make precise certain definitions involved here as there
are different conventions for the second fundamental form and the mean
curvature in the literature. Let $\nu$ be the outer unit normal field of
$\Sigma$ in $M$. For any $p\in\Sigma$, for any $X,Y\in T_{p}\Sigma$ the second
fundamental form is defined as%
\[
\Pi\left(  X,Y\right)  =\left\langle \nabla_{X}\nu,Y\right\rangle .
\]
The mean curvature is the trace of the second fundamental form.

Put in another way, the theorem says that for a compact manifold with
boundary, if we know that the boundary is $\mathbb{S}^{n-1}$(intrinsic
geometry on the boundary) and convex (some extrinsic geometry) then we
recognize the manifold as the hemisphere $\mathbb{S}_{+}^{n}$, provided
$\mathrm{Ric}\geq\left(  n-1\right)  g$. To put this result in a context, we
first recall the following

\begin{theorem}
\label{ball}Let $\left(  M^{n},g\right)  $ be a compact Riemannian manifold
with boundary and scalar curvature $R\geq0$. If the boundary is isometric to
$\mathbb{S}^{n-1}$ and has mean curvature $n-1$, then $\left(  M^{n},g\right)
$ is isometric to the unit ball $\overline{\mathbb{B}^{n}}\subset
\mathbb{R}^{n}$. (If $n>7$ we need to assume that $M$ is spin.)
\end{theorem}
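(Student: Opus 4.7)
The plan is to deduce this from the Shi--Tam positivity theorem for the Brown--York quasi-local mass, whose rigidity case is built on the rigidity of the positive mass theorem; this is what forces the spin hypothesis in dimension $n>7$.

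The first step is to note that $(\Sigma,g|_\Sigma)\cong\mathbb{S}^{n-1}$ admits the canonical isometric embedding into $\mathbb{R}^n$ as the round unit sphere, and the mean curvature of this embedding, taken with respect to the outward Euclidean normal in the convention of the paper, is $H_0=n-1$. This matches the hypothesis that $\Sigma\subset M$ has mean curvature $H=n-1$. I would then apply the Shi--Tam inequality
\[
\int_\Sigma H\,d\sigma\;\leq\;\int_\Sigma H_0\,d\sigma,
\]
valid whenever $R\geq0$ on $M$, $H>0$ along $\Sigma$, and $\Sigma$ embeds as a strictly convex hypersurface in $\mathbb{R}^n$. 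Since $H\equiv H_0\equiv n-1$, both sides coincide, so we land in the equality case.

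To upgrade equality to the geometric conclusion, I would invoke the construction underlying Shi--Tam. One glues $M$ to the exterior of the unit ball $\mathbb{R}^n\setminus\mathbb{B}^n$ along $\Sigma$, equipping the exterior with a warped metric of the form $u^2\,dr^2+g_r$, where $g_r$ denotes the round metric on $\{|x|=r\}$ and $u$ is obtained by solving a parabolic equation on each sphere so that the extended metric has zero scalar curvature, decays to the Euclidean metric at infinity, and satisfies $u|_{r=1}=H_0/H$, ensuring that the mean curvatures of $\Sigma$ computed from either side agree. A direct computation identifies the ADM mass of the resulting asymptotically flat manifold $\widetilde M$ with a positive constant multiple of $\int_\Sigma(H_0-H)\,d\sigma$, which vanishes in our equality case. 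The rigidity half of the positive mass theorem then forces $\widetilde M$ to be isometric to flat $\mathbb{R}^n$; restricting to $M\subset\widetilde M$ identifies $(M,g)$ with the compact domain bounded by the unit sphere, namely $\overline{\mathbb{B}^n}$.

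The principal obstacle is the rigidity form of the positive mass theorem in dimension $n$. It is available in full generality in the spin category via Witten's spinor argument, and for $n\leq 7$ via the Schoen--Yau minimal hypersurface induction; beyond $n=7$ the non-spin case is out of reach by current techniques, which is precisely what accounts for the spin assumption in the statement.
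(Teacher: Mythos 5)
Your argument is correct and is essentially the paper's own proof: the paper likewise glues $M$ to the flat exterior $\mathbb{R}^{n}\setminus\mathbb{B}^{n}$ along $\mathbb{S}^{n-1}$ (which is exactly what your Shi--Tam extension reduces to here, since $u|_{r=1}=H_{0}/H\equiv1$ forces $u\equiv1$) and then invokes the rigidity case of the positive mass theorem, citing Miao and Shi--Tam for the treatment of the Lipschitz corner where the matching mean curvatures keep the distributional scalar curvature nonnegative. The one imprecision is that in general the ADM mass is only bounded above by a multiple of $\int_{\Sigma}\left(  H_{0}-H\right)  d\sigma$ via the Shi--Tam monotonicity rather than identified with it, but in the present equality case the exterior is exactly Euclidean and the mass vanishes outright, so nothing is lost.
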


This remarkable result is a simple corollary of the positive mass theorem:
indeed one may glue $M$ with $\mathbb{R}^{n}\backslash\mathbb{B}^{n}$ along
the boundary $\mathbb{S}^{n-1}$ to obtain an asymptotically flat manifold $N$
with nonnegative scalar curvature. Since it is actually flat near infinity the
positive mass theorem implies that $N$ is isometric to $\mathbb{R}^{n}$ and
hence $M$ is isometric to $\overline{\mathbb{B}^{n}}$ (see \cite{M, ST} for
details). There are similar rigidity results for geodesic balls in the
hyperbolic space assuming $R\geq-n\left(  n-1\right)  $ by applying the
positive mass theorem for asymptotically hyperbolic manifolds.

It is a natural question to consider the hemisphere. The following conjecture
was proposed by Min-Oo in 1995.

\begin{conjecture}
(Min-Oo) Let $\left(  M^{n},g\right)  $ be a compact Riemannian manifold with
boundary and scalar curvature $R\geq n\left(  n-1\right)  $. If the boundary
is isometric to $\mathbb{S}^{n-1}$ and totally geodesic, then $\left(
M^{n},g\right)  $ is isometric to the hemisphere $\mathbb{S}_{+}^{n}$.
\end{conjecture}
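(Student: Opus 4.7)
\medskip
\noindent The natural strategy is to imitate Min-Oo's own proof of the analogous rigidity for real hyperbolic space, namely to use the Dirac operator and Killing spinors. Recall that on the round $S^n$ there is a maximal space of \emph{real Killing spinors}, i.e.\ spinors $\psi$ satisfying $\nabla_X\psi = \tfrac{1}{2}X\cdot\psi$ (where $\cdot$ denotes Clifford multiplication), and the hemisphere $S^n_+$ inherits these. The plan is therefore to produce, on the given $(M,g)$, a full complement of spinors parallel for the modified connection $\widetilde{\nabla}_X\psi := \nabla_X\psi - \tfrac{1}{2}X\cdot\psi$; from the pointwise algebra of such spinors one can manufacture the functions and vector fields that assemble into an isometry with $S^n_+$.

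Assuming $M$ is spin (automatic in low dimensions; in general one might pass to a cover or substitute a Seiberg--Witten argument), the central analytic identity is the Friedrich--Weitzenb\"ock formula for the twistor-type operator
\[
\int_M |\widetilde{\nabla}\psi|^2 \;=\; \int_M |\widetilde{D}\psi|^2 \;-\; \tfrac{1}{4}\int_M \bigl(R - n(n-1)\bigr)|\psi|^2 \;+\; \int_\Sigma \mathcal{B}(\psi),
\]
where $\widetilde{D}\psi := D\psi + \tfrac{n}{2}\psi$ is the associated twisted Dirac operator and $\mathcal{B}$ is a pointwise boundary expression. The hypothesis $R\geq n(n-1)$ makes the interior term favorable. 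I would then look for an elliptic boundary condition of chirality type, built from the outward normal $\nu$, which (i) renders $\int_\Sigma \mathcal{B}(\psi)$ nonnegative under the totally geodesic assumption $\Pi\equiv 0$, and (ii) has index equal to the dimension of the Killing spinor space of $S^n$. On the round model each Killing spinor of $S^n_+$ restricts to $S^{n-1}$ as an eigenvector of $\nu\cdot$, which suggests projecting onto a fixed eigenspace of $\nu\cdot$ as the boundary operator.

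If such a boundary value problem can be set up, solving $\widetilde{D}\psi = 0$ would force $\widetilde{\nabla}\psi\equiv 0$, so that $\psi$ is a genuine Killing spinor. Running this for a basis $\psi_1,\ldots,\psi_N$ and forming the map $\Phi:M\to\mathbb{R}^{n+1}$ whose components are the real quadratic expressions $\langle\psi_i,\psi_j\rangle$ and $\langle\psi_i,X\cdot\psi_j\rangle$ should yield the desired isometry with $S^n_+$, since on the model these quantities recover the ambient Euclidean coordinates of the hemisphere.

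The main obstacle, and in my view the decisive one, is the combination of the boundary term with the fact that the totally geodesic hypothesis is purely first-order in the metric. Unlike the mean curvature condition used for the ball rigidity of Theorem~\ref{ball}, which feeds directly into the Weitzenb\"ock boundary integral via the identity relating the restricted bulk Dirac operator to the hypersurface Dirac operator plus half the mean curvature, here $H=0$ and $\Pi\equiv 0$ give relatively little leverage: the chirality boundary contribution vanishes along $\Sigma$, but it does not automatically absorb the extra $-\tfrac{1}{2}X\cdot$ twist coming from $\widetilde{\nabla}$. Making the index count, the Fredholm theory, and the sign balance all succeed with only a scalar-curvature bound and a purely first-order boundary hypothesis --- or else uncovering a genuine obstruction that rules out such a setup --- is precisely the sticking point, and is where I would expect either the proof or its failure to be decided.
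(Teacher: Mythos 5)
You have been asked to prove a statement that the paper itself does not prove: it is stated as a \emph{Conjecture} (Min-Oo's conjecture), and the authors explicitly remark that the positive-mass-theorem strategy behind Theorem \ref{ball} breaks down here, that "this conjecture seems difficult," and that only partial results are known. What the paper actually proves is the Ricci-curvature analogue, Theorem \ref{main}, by a completely different (non-spinorial) route via Reilly's eigenvalue theorem and a Bochner argument. So there is no proof in the paper against which your proposal can be matched.

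Your proposal, read on its own terms, is not a proof either, and you say as much: the entire argument hinges on finding an elliptic boundary condition for the Killing-twisted Dirac operator that makes the boundary integral nonnegative using only $\Pi\equiv 0$ and produces a nontrivial kernel, and you leave that step open. That is a genuine gap, not a technical loose end --- it is precisely the point where every known spinorial attack on this problem has stalled, because the $+\tfrac{n}{2}$ zeroth-order twist in $\widetilde{D}$ destroys the favorable sign of the hypersurface Dirac boundary term that makes the scalar-flat ball case work. In fact no completion is possible: for $n\geq 3$ the conjecture as stated is false (Brendle, Marques, and Neves later constructed metrics on the hemisphere with $R\geq n(n-1)$, totally geodesic round boundary, and $g$ not isometric to the standard one), so the "genuine obstruction" you suspect at the end is real. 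The honest conclusion is that your sketch correctly diagnoses why the Witten-type argument fails but cannot be turned into a proof, and that the statement itself should not be treated as a theorem.
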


The proof of Theorem \ref{ball} does not seem to work any more: there is no
positive mass theorem providing a miraculous passage from the compact manifold
in question to a noncompact manifold. As it stands this conjecture seems
difficult. There have only been some partial results in \cite{HW} and some
recent progress in dimension three in \cite{E}. Theorem \ref{main} can be
viewed as the Ricci version of Min-Oo's conjecture. It is a strong evidence
that Min-Oo's conjecture should be true.

In dimension 2 it turns out that Theorem \ref{main} is essentially equivalent
to a result of Toponogov on the length of simple closed geodesics on a
strictly convex surface. This connection is discussed in Section 2 in which we
also present a different proof working only in dimension 2. This proof may
have some independent interest. It is also interesting to compare this two
dimensional argument, which is partly geometric and partly analytic, with the
unified proof of purely analytic nature presented in Section 3.

\textbf{Acknowledgement: }The research of F. Hang is supported by National
Science Foundation Grant DMS-0647010 and a Sloan Research Fellowship. The
research of X. Wang is supported by National Science Foundation Grant
DMS-0505645. We would like to thank Christina Sormani for valuable discussions.

\section{\bigskip The two dimensional case}

When $n=2$ we consider a compact surface $(M^{2},g)$ with boundary. The
boundary then consists of closed curves and there is no intrinsic geometry
except the lengths of these curves. The extrinsic geometry of the boundary is
given by the geodesic curvature. Therefore Theorem \ref{main} follows from the
following slightly stronger result.

\begin{theorem}
Let $(M^{2},g)$ be compact surface with boundary and the Gaussian curvature
$K\geq1.$ Suppose the geodesic curvature $k$ of the boundary $\gamma$
satisfies $k\geq c$ $\geq0$. Then $L(\gamma)\leq2\pi/\sqrt{1+c^{2}}$. Moreover
equality holds iff $(M,g)$ is isometric to a disc of radius $\cot^{-1}(c)$ in
$\mathbb{S}^{2}$.

\begin{proof}
By Gauss-Bonnet formula
\[
2\pi\chi\left(  M\right)  =\int_{M}Kd\sigma+\int_{\gamma}kds>0,
\]
where $\chi\left(  M\right)  $ is the Euler number of $M$. Therefore $M$ is
simply connected and in particular $\gamma$ has only one component. By the
Riemann mapping theorem, $(M,g)$ is conformally equivalent to the unit disc
$\overline{\mathbb{B}}\mathbb{=}\left\{  z\in\mathbb{C}:\left\vert
z\right\vert \leq1\right\}  $. Without loss of generality, we take $(M,g)$ to
be $(\overline{\mathbb{B}},g=e^{2u}|dz|^{2})$ with $u\in C^{\infty}\left(
\overline{\mathbb{B}},\mathbb{R}\right)  $. By our assumptions we have%

\[
\left\{
\begin{array}
[c]{c}%
-\Delta u\geq e^{2u}\text{ on }\overline{\mathbb{B}},\\
\frac{\partial u}{\partial r}+1\geq ce^{u}\text{ \ on }\mathbb{S}^{1}%
\end{array}
\right.
\]
Let $\underline{u}\in C^{\infty}\left(  \overline{\mathbb{B}},\mathbb{R}%
\right)  $ such that%
\[
\left\{
\begin{array}
[c]{c}%
-\Delta\underline{u}=0\text{ on }B,\\
\left.  \underline{u}\right\vert _{\mathbb{S}^{1}}=\left.  u\right\vert
_{\mathbb{S}^{1}}.
\end{array}
\right.
\]
Then $\underline{u}\leq u$ as $u$ is superharmonic. It follows from sub-sup
solution method (see, e.g., \cite[page 187-189]{SY}) that we may find a $v\in
C^{\infty}\left(  \overline{\mathbb{B}},\mathbb{R}\right)  $ with%
\[
\left\{
\begin{array}
[c]{c}%
-\Delta v=e^{2v}\text{ on }\overline{\mathbb{B}},\\
\underline{u}\leq v\leq u.
\end{array}
\right.
\]
Since $v\leq u$ and $v|_{\mathbb{S}^{1}}=u|_{\mathbb{S}^{1}}$ we have
$\frac{\partial v}{\partial\nu}\geq\frac{\partial u}{\partial\nu}$ and hence
$\left.  \frac{\partial v}{\partial\nu}\right\vert _{S^{1}}+1\geq ce^{u}$,
i.e. the boundary circle has has geodesic curvature $\geq c$. As the metric
$\left(  \overline{\mathbb{B}},e^{2v}|dz|^{2}\right)  $ has curvature $1$ and
the boundary circle is convex, it can be isometrically embedded as a domain in
$\mathbb{S}^{2}$, say $\Omega$. Denote $\sigma=\partial\Omega$ parametrized by
arclength. Notice $L\left(  \sigma\right)  =L\left(  \gamma\right)  $ as $v=u$
on the boundary $\mathbb{S}^{1}$. Because the boundary has geodesic curvature
$\geq c\geq0$, it is known that the smallest geodesic disc $D$ containing
$\Omega$ has radius at most $\cot^{-1}(c)$. Hence $L\left(  \gamma\right)
=L(\sigma)\leq2\pi/\sqrt{1+c^{2}}=L\left(  \partial D\right)  $. The equality
case follows directly from the argument.
\end{proof}
\end{theorem}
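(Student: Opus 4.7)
\bigskip
\noindent\textbf{Proof proposal.} The plan is to use the conformal structure of a simply connected surface to reduce everything to a semilinear elliptic PDE on the disc, and then further to a classical fact about convex curves in $\mathbb{S}^{2}$.

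First I would apply Gauss--Bonnet, $2\pi\chi(M)=\int_{M}K\,d\sigma+\int_{\gamma}k\,ds$. Since $K\geq 1$ and $k\geq 0$, the right side is strictly positive, so $\chi(M)>0$; as $M$ is a compact surface with boundary this forces $M$ to be diffeomorphic to a disc and $\gamma$ to be connected. By the Riemann mapping theorem I may then write $(M,g)=(\overline{\mathbb{B}},e^{2u}|dz|^{2})$, so that the curvature and geodesic curvature bounds translate into
\[
-\Delta u \geq e^{2u} \text{ on } \overline{\mathbb{B}}, \qquad \partial_{r}u+1\geq c e^{u} \text{ on } \mathbb{S}^{1}.
\]

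Next I would reduce to the constant curvature case. Since $u$ is a smooth supersolution of $-\Delta w=e^{2w}$, and the harmonic extension $\underline{u}$ of $u|_{\mathbb{S}^{1}}$ satisfies $\underline{u}\leq u$ with $-\Delta\underline{u}=0\leq e^{2\underline{u}}$, the standard sub/supersolution method yields a smooth $v$ with $-\Delta v=e^{2v}$ and $\underline{u}\leq v\leq u$. Because $v\leq u$ with equality on $\mathbb{S}^{1}$, comparison of normal derivatives gives $\partial_{\nu}v\geq \partial_{\nu}u$, so the metric $e^{2v}|dz|^{2}$ has constant curvature $1$, its boundary circle has geodesic curvature $\geq c$, and it has the same length as $\gamma$. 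Developing this constant curvature disc isometrically, I obtain a domain $\Omega\subset\mathbb{S}^{2}$ with convex boundary of geodesic curvature $\geq c$ and $L(\partial\Omega)=L(\gamma)$.

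The last step is to invoke the classical comparison result that a domain in $\mathbb{S}^{2}$ with boundary of geodesic curvature $\geq c\geq 0$ is contained in a geodesic disc of radius at most $\cot^{-1}(c)$; since the boundary of a geodesic disc of radius $r$ in $\mathbb{S}^{2}$ has length $2\pi\sin r$ and $\sin(\cot^{-1} c)=1/\sqrt{1+c^{2}}$, the desired inequality $L(\gamma)\leq 2\pi/\sqrt{1+c^{2}}$ follows. The main obstacle I expect is precisely this comparison step on $\mathbb{S}^{2}$, together with pinning down the equality case through the chain of reductions: equality in the length bound must force $\Omega$ to be a geodesic disc of radius $\cot^{-1}(c)$, which in turn forces $v\equiv u$ and strong rigidity in all prior inequalities, so that $(M,g)$ itself is isometric to that disc.
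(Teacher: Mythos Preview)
Your proposal is correct and follows essentially the same route as the paper's proof: Gauss--Bonnet to get simple connectivity, uniformization to $(\overline{\mathbb{B}},e^{2u}|dz|^{2})$, the sub/supersolution construction of a constant-curvature $v$ with the same boundary values, comparison of normal derivatives to retain the geodesic-curvature bound, development into $\mathbb{S}^{2}$, and finally the circumscribed-disc comparison for convex curves in $\mathbb{S}^{2}$. The only difference is cosmetic---you spell out a bit more (e.g.\ that $\underline{u}$ is a subsolution and the computation $\sin(\cot^{-1}c)=1/\sqrt{1+c^{2}}$) and flag the last comparison and the equality analysis as the points requiring care, which the paper handles by citing them as known and tracing rigidity back through the chain.
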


As a corollary we have the following theorem due to Toponogov.

\begin{corollary}
(Toponogov \cite{T}) Let $(M^{2},g)$ be a closed surface with Gaussian
curvature $K\geq1$. Then any simple closed geodesic in $M$ has length at most
$2\pi$. Moreover if there is one with length $2\pi$, then $M$ is isometric to
the standard sphere $\mathbb{S}^{2}$.
\end{corollary}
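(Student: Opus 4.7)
The plan is to apply the preceding theorem, with $c = 0$, to each connected piece of $M$ obtained by cutting along the simple closed geodesic $\gamma$. Because $K \geq 1$, the Gauss--Bonnet formula $2\pi\chi(M) = \int_M K\, d\sigma \geq \operatorname{Area}(M) > 0$ forces $\chi(M) > 0$, so $M$ is diffeomorphic to either $\mathbb{S}^2$ or $\mathbb{RP}^2$.

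Consider first the case in which $\gamma$ separates $M$ (automatic on $\mathbb{S}^2$). Let $\overline{\Omega}_1, \overline{\Omega}_2$ be the closures of the two components of $M \setminus \gamma$; each is a compact surface whose boundary is the single curve $\gamma$, with $k \equiv 0$ since $\gamma$ is a geodesic. Gauss--Bonnet applied to $\overline{\Omega}_i$ gives $2\pi\chi(\overline{\Omega}_i) \geq \operatorname{Area}(\overline{\Omega}_i) > 0$, so each $\overline{\Omega}_i$ is a disk --- precisely the situation in which the preceding theorem applies. Taking $c = 0$ there yields $L(\gamma) \leq 2\pi$. For the equality case, the rigidity statement of that theorem forces both $\overline{\Omega}_i$ to be isometric to a disk of radius $\cot^{-1}(0) = \pi/2$ in $\mathbb{S}^2$, i.e., a standard round hemisphere; gluing two such hemispheres along their common great-circle boundary recovers the round $\mathbb{S}^2$, with smoothness of the metric automatic because both pieces have totally geodesic boundary so the second fundamental forms match trivially.

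It remains to dispose of the possibility that $\gamma$ does not separate, which can only happen on $\mathbb{RP}^2$ with $\gamma$ one-sided. In that case, cutting $M$ along $\gamma$ produces a single disk whose boundary is a geodesic double-covering $\gamma$, and hence has length $2L(\gamma)$. The preceding theorem then gives $2L(\gamma) \leq 2\pi$, i.e., the strictly stronger bound $L(\gamma) \leq \pi$, which both subsumes the inequality and rules out the equality $L(\gamma) = 2\pi$ in this scenario. The main obstacle I anticipate is not analytic but topological: verifying that every cut piece --- in both the separating and the one-sided situations --- is genuinely a compact disk with smooth geodesic boundary, so that the hypotheses of the preceding theorem are cleanly met and the two cases merge into the single bound $L(\gamma) \leq 2\pi$ with equality only on the round $\mathbb{S}^2$.
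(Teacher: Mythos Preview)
Your proposal is correct and follows essentially the same approach as the paper: cut $M$ along $\gamma$ and apply the preceding theorem with $c=0$ to the resulting pieces. You are in fact more careful than the paper, which simply asserts that cutting yields ``two compact surfaces'' without addressing the possibility of a one-sided $\gamma$ on $\mathbb{RP}^2$; your treatment of that case is a welcome addition, and your separate verification that each piece is a disk is harmless but unnecessary, since the preceding theorem already deduces simple connectivity from Gauss--Bonnet.
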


\begin{proof}
Suppose $\gamma$ is a simple close geodesic. We cut $M$ along $\gamma$ to
obtain two compact surfaces with the geodesic $\gamma$ as their common
boundary. The result follows from applying the previous theorem to either of
these two compact surfaces with boundary.
\end{proof}

\bigskip

Toponogov's original proof, as presented in Klingenberg \cite[page 297]{K}
uses his triangle comparison theorem. In applying the triangle comparison
theorem, which requires at least two minimizing geodesics, the difficulty is
to know how long a geodesic segment is minimizing without assuming an upper
bound for curvature. As the proof presented above, this difficulty is overcome
by using special features of two dimensional topology.

\bigskip

\section{\bigskip The proof of the main theorem}

\bigskip We now present a proof of Theorem \ref{main} which works in any
dimension $n\geq2$. We first recall the following result due to Reilly.

\begin{theorem}
(Reilly \cite{R}) Let $\left(  M^{n},g\right)  $ be a compact Riemannian
manifold with nonempty boundary $\Sigma=\partial M$. Assume that
$\mathrm{Ric}\geq\left(  n-1\right)  g$ and the mean curvature of $\Sigma$ in
$M$ is nonnegative. Then the first (Dirichlet) eigenvalue $\lambda_{1}$ of
$-\Delta$ satisfies the inequality $\lambda_{1}\geq n$. Moreover $\lambda
_{1}=n$ iff $M$ is isometric to the standard hemisphere $\mathbb{S}_{+}%
^{n}\subset\mathbb{R}^{n+1}$.
\end{theorem}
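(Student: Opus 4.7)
The plan is to apply Reilly's integral identity to a first Dirichlet eigenfunction and then read off rigidity from an Obata-type equation. Let $f$ be a first eigenfunction, so $\Delta f = -\lambda_{1} f$ on $M$ and $f|_{\Sigma} = 0$. Starting from the Bochner identity
\[
\tfrac{1}{2}\Delta|\nabla f|^{2} = |\nabla^{2} f|^{2} + \langle\nabla\Delta f,\nabla f\rangle + \mathrm{Ric}(\nabla f,\nabla f)
\]
and integrating over $M$, the boundary contributions produced by the divergence theorem collapse---because $f\equiv0$ on $\Sigma$ kills the tangential pieces in Reilly's formula---to give
\[
\int_{M}\!\left[(\Delta f)^{2} - |\nabla^{2} f|^{2} - \mathrm{Ric}(\nabla f,\nabla f)\right]dV \;=\; \int_{\Sigma} H\!\left(\tfrac{\partial f}{\partial\nu}\right)^{\!2} dA,
\]
where $H$ denotes the mean curvature (trace of $\Pi$).

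Next I would plug in the hypotheses. The Cauchy--Schwarz inequality gives $|\nabla^{2}f|^{2}\geq(\Delta f)^{2}/n = \lambda_{1}^{2}f^{2}/n$; the Ricci lower bound gives $\mathrm{Ric}(\nabla f,\nabla f)\geq(n-1)|\nabla f|^{2}$; and integration by parts gives $\int_{M}|\nabla f|^{2} = \lambda_{1}\int_{M}f^{2}$. Combining these with the assumption $H\geq0$ on $\Sigma$ yields
\[
\tfrac{n-1}{n}\lambda_{1}(\lambda_{1}-n)\int_{M}f^{2} \;\geq\; 0,
\]
and hence $\lambda_{1}\geq n$.

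In the equality case $\lambda_{1}=n$, every inequality above must saturate. In particular one obtains the Obata equation $\nabla^{2}f = -fg$ on $M$. A direct computation then gives $\nabla(|\nabla f|^{2}+f^{2}) = 2\nabla^{2}f(\nabla f,\cdot) + 2f\nabla f = 0$, so $|\nabla f|^{2}+f^{2}$ is constant. Choosing $f>0$ in the interior (as allowed since $\lambda_{1}$ is the first eigenvalue) and normalizing this constant to $1$, the function $f$ attains its maximum value $1$ at an interior critical point $p$.

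To conclude $(M,g)\cong\mathbb{S}_{+}^{n}$, I would integrate the Obata equation along each unit-speed geodesic $\gamma$ emanating from $p$: the function $u(t) := f(\gamma(t))$ satisfies $u''+u=0$ with $u(0)=1$, $u'(0)=0$, so $u(t)=\cos t$. Consequently $f = \cos r$ where $r = d(p,\cdot)$, forcing $\Sigma=\{f=0\}$ to be the geodesic sphere of radius $\pi/2$ about $p$ and $\exp_{p}$ to be a diffeomorphism from the closed ball of radius $\pi/2$ onto $M$; together with $|\nabla f|^{2} = 1-f^{2} = \sin^{2} r$ and the Obata equation, a standard polar-coordinates computation delivers $g = dr^{2}+\sin^{2}r\,g_{\mathbb{S}^{n-1}}$. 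The main subtlety is exactly this last step: Obata's rigidity is classical on closed manifolds, but in the boundary setting one must use the Dirichlet condition $\Sigma=\{f=0\}$ to anchor the radial coordinate and ensure $\exp_{p}$ is a global diffeomorphism onto $M$.
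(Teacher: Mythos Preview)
The paper does not actually prove this theorem: it is quoted as a known result of Reilly \cite{R} and used as a black box in the proof of Theorem~\ref{main}. So there is no ``paper's own proof'' to compare against. Your argument is the standard one---Reilly's integrated Bochner identity applied to a first Dirichlet eigenfunction, followed by Cauchy--Schwarz on the Hessian and the Ricci lower bound---and it is correct. The derivation of the boundary term $\int_{\Sigma} H(\partial f/\partial\nu)^{2}$ is right (the tangential pieces vanish because $f|_{\Sigma}=0$), and the chain of inequalities leading to $\tfrac{n-1}{n}\lambda_{1}(\lambda_{1}-n)\int_{M}f^{2}\geq 0$ is clean.

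For the rigidity step your outline is also sound, and in fact it closely mirrors what the paper does later in its proof of Theorem~\ref{main}: from $\nabla^{2}f=-fg$ one gets $|\nabla f|^{2}+f^{2}$ constant, then $f=\cos r$ in geodesic polar coordinates about the interior maximum, and finally the Hessian equation forces $\partial h_{r}/\partial r = 2(\cos r/\sin r)h_{r}$, hence $h_{r}=\sin^{2}r\,h_{0}$. The one point you flag as a subtlety---that $\exp_{p}$ is a global diffeomorphism onto $M$---is indeed the only place requiring care; it follows because $|\nabla f|^{2}=\sin^{2}r>0$ for $0<r<\pi/2$ (so there are no critical points of $f$ other than $p$, hence no conjugate or cut points before the boundary) and because the Dirichlet condition pins $\Sigma=\{f=0\}=\{r=\pi/2\}$. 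You have correctly identified this and handled it adequately.
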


Therefore to prove Theorem \ref{main}, it suffices to show $\lambda_{1}\left(
M\right)  =n$. If this were not the case, then $\lambda_{1}\left(  M\right)
>n$. Therefore for every $f\in C^{\infty}\left(  \Sigma\right)  $ there is a
unique $u\in C^{\infty}\left(  M\right)  $ solving%
\begin{equation}
\left\{
\begin{array}
[c]{ccc}%
-\Delta u=nu & \text{on} & M,\\
u=f & \text{on} & \Sigma.
\end{array}
\right. \label{bv}%
\end{equation}
Define
\[
\phi=\left\vert \nabla u\right\vert ^{2}+u^{2}.
\]

\begin{lemma}
\label{sub}\bigskip\ $\phi$ is subharmonic, i.e. $\Delta\phi\geq0$.
\end{lemma}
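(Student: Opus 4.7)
The natural tool here is the Bochner formula applied to the eigenfunction $u$ of $-\Delta$ with eigenvalue $n$. My plan is to compute $\Delta|\nabla u|^2$ and $\Delta u^2$ separately, and then show that the combination $\phi = |\nabla u|^2 + u^2$ has nonnegative Laplacian, with the Ricci hypothesis absorbing one bad term and the refined Kato-type inequality $|\nabla^2 u|^2 \geq (\Delta u)^2/n$ absorbing the other.

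First I would apply Bochner's formula
\[
\tfrac{1}{2}\Delta|\nabla u|^2 = |\nabla^2 u|^2 + \langle \nabla u, \nabla \Delta u\rangle + \mathrm{Ric}(\nabla u, \nabla u).
\]
Since $-\Delta u = nu$ on $M$, we have $\nabla\Delta u = -n\nabla u$, so the middle term equals $-n|\nabla u|^2$. Combined with the hypothesis $\mathrm{Ric}\geq(n-1)g$, this yields
\[
\tfrac{1}{2}\Delta|\nabla u|^2 \geq |\nabla^2 u|^2 - |\nabla u|^2.
\]
Separately, a direct computation gives $\tfrac{1}{2}\Delta u^2 = u\Delta u + |\nabla u|^2 = -nu^2 + |\nabla u|^2$. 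Adding the two identities, the $|\nabla u|^2$ terms cancel and I get
\[
\tfrac{1}{2}\Delta\phi \geq |\nabla^2 u|^2 - nu^2.
\]

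To close the estimate I would invoke the pointwise Cauchy--Schwarz inequality
\[
|\nabla^2 u|^2 \geq \frac{(\Delta u)^2}{n} = \frac{(nu)^2}{n} = nu^2,
\]
which holds because the trace of any symmetric $n\times n$ matrix $A$ satisfies $(\mathrm{tr}\,A)^2 \leq n|A|^2$. This forces $\Delta\phi\geq 0$ everywhere on $M$, which is exactly the subharmonicity asserted in the lemma.

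There is no real obstacle here; the one thing worth noting is the tightness of the calculation. The Ricci bound $(n-1)g$ is calibrated exactly to cancel the $-n|\nabla u|^2$ contribution from the PDE down to $-|\nabla u|^2$, and then adding $\tfrac{1}{2}\Delta u^2$ is calibrated exactly to remove that remaining gradient term. What is left, $|\nabla^2 u|^2 - nu^2$, is the sharpest possible residue and is nonnegative precisely because $-\Delta u = nu$. This tightness is also what one expects, since equality throughout should correspond to the hemisphere model where $u$ is the height function and $\phi$ is constant.
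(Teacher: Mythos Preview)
Your argument is correct and is essentially the same as the paper's: both use the Bochner formula together with $\Delta u=-nu$ and $\mathrm{Ric}\geq (n-1)g$ to obtain $\tfrac{1}{2}\Delta\phi\geq |\nabla^2 u|^2-nu^2$, and then close with the Cauchy--Schwarz inequality $|\nabla^2 u|^2\geq(\Delta u)^2/n=nu^2$. The only cosmetic difference is that the paper writes the computation for $\tfrac{1}{2}\Delta\phi$ in a single chain of inequalities rather than treating $|\nabla u|^2$ and $u^2$ separately before adding.
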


\begin{proof}
Using the Bochner formula, the equation (\ref{bv}) and the assumption
\textrm{$Ric$}$\geq\left(  n-1\right)  g$,
\begin{align*}
\frac{1}{2}\Delta\phi & =\left\vert D^{2}u\right\vert ^{2}+\left\langle \nabla
u,\nabla\Delta u\right\rangle +\mathrm{Ric}(\nabla u,\nabla u)+\left\vert
\nabla u\right\vert ^{2}+u\Delta u\\
& \geq\left\vert D^{2}u\right\vert ^{2}-nu^{2}\\
& \geq\frac{\left(  \Delta u\right)  ^{2}}{n}-nu^{2}\\
& =0.
\end{align*}

\end{proof}

\bigskip

Denote $\chi=\frac{\partial u}{\partial\nu}$, the derivative on the boundary
in the outer unit normal $\nu$. By the assumption of Theorem \ref{main} there
is an isometry $F:\left(  \Sigma,g|_{\Sigma}\right)  \rightarrow
\mathbb{S}^{n-1}\subset\mathbb{R}^{n}$. In the following let $f=\sum_{i=1}%
^{n}\alpha_{i}x_{i}\circ F$, where $x_{1},\cdots,x_{n}$ are the standard
coordinate functions on $\mathbb{S}^{n-1}$ and $\alpha=\left(  \alpha
_{1},\cdots,\alpha_{n}\right)  \in\mathbb{S}^{n-1}$. We have%
\[
-\Delta_{\Sigma}f=\left(  n-1\right)  f,\text{ \ \ }\left\vert \nabla_{\Sigma
}f\right\vert ^{2}+f^{2}=1.
\]
Hence
\begin{equation}
\phi|_{\Sigma}=\left\vert \nabla_{\Sigma}f\right\vert ^{2}+\chi^{2}%
+f^{2}=1+\chi^{2}.\label{fb}%
\end{equation}
On the boundary $\Sigma$%
\[
-nf=\Delta u|_{\Sigma}=\Delta_{\Sigma}f+H\chi+D^{2}u\left(  \nu,\nu\right)
=-\left(  n-1\right)  f+H\chi+D^{2}u\left(  \nu,\nu\right)  ,
\]
whence
\begin{equation}
D^{2}u\left(  \nu,\nu\right)  +f=-H\chi.\label{d2un}%
\end{equation}

\begin{lemma}
\label{nf}\bigskip On $\Sigma$%
\[
\frac{1}{2}\frac{\partial\phi}{\partial\nu}=\left\langle \nabla_{\Sigma
}f,\nabla_{\Sigma}\chi\right\rangle -H\chi^{2}-\Pi\left(  \nabla_{\Sigma
}f,\nabla_{\Sigma}f\right)  .
\]

\end{lemma}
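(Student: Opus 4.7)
The plan is to differentiate $\phi = |\nabla u|^2 + u^2$ in the $\nu$ direction and then rewrite every term that arises so that only tangential objects on $\Sigma$, the normal derivative $\chi$, and quantities already controlled by (\ref{d2un}) appear. Directly from the definition,
\[
\tfrac{1}{2}\tfrac{\partial \phi}{\partial \nu} = D^2 u(\nu,\nabla u) + u\chi.
\]
On $\Sigma$ we have $u=f$, so the gradient splits as $\nabla u = \nabla_\Sigma f + \chi\,\nu$. Plugging this into the Hessian term gives
\[
\tfrac{1}{2}\tfrac{\partial \phi}{\partial \nu} = D^2 u(\nu,\nabla_\Sigma f) + \chi\, D^2 u(\nu,\nu) + f\chi.
\]
The term $D^2 u(\nu,\nu)$ is handled by (\ref{d2un}): $\chi\, D^2u(\nu,\nu) + f\chi = \chi(-H\chi - f) + f\chi = -H\chi^2$. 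So the only real work is to rewrite the mixed term $D^2 u(\nu,\nabla_\Sigma f)$ in terms of tangential derivatives.

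For this, I would differentiate the identity $\chi = \langle \nabla u, \nu\rangle$ along an arbitrary tangent direction $X \in T_p\Sigma$. This gives
\[
X(\chi) = \langle \nabla_X \nabla u, \nu\rangle + \langle \nabla u, \nabla_X \nu\rangle = D^2 u(X,\nu) + \langle \nabla u, \nabla_X \nu\rangle.
\]
Since $|\nu|=1$, the vector $\nabla_X \nu$ is tangent to $\Sigma$, so only the tangential part of $\nabla u$ contributes: $\langle \nabla u, \nabla_X \nu\rangle = \langle \nabla_\Sigma f, \nabla_X \nu\rangle = \Pi(X, \nabla_\Sigma f)$, by the convention stated in the introduction. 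Taking $X = \nabla_\Sigma f$ yields
\[
D^2u(\nu,\nabla_\Sigma f) = \langle \nabla_\Sigma f, \nabla_\Sigma \chi\rangle - \Pi(\nabla_\Sigma f, \nabla_\Sigma f).
\]

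Substituting back gives exactly the claimed formula. The only mildly tricky step is the one above: recognising that the obstruction to swapping a normal derivative with a tangential derivative of $u$ is precisely encoded by the second fundamental form, which is what produces the $\Pi(\nabla_\Sigma f,\nabla_\Sigma f)$ contribution. Everything else is a direct bookkeeping computation combined with the boundary identity (\ref{d2un}) from the previous paragraph.
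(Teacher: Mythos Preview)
Your argument is correct and is essentially the same as the paper's: you split $\nabla u$ into tangential and normal parts, use (\ref{d2un}) to handle the $D^2u(\nu,\nu)$ term, and then compute $D^2u(\nu,\nabla_\Sigma f)$ by differentiating $\chi=\langle\nabla u,\nu\rangle$ tangentially, which is exactly the product-rule computation the paper performs. The only difference is presentational---you introduce an auxiliary tangent vector $X$ before specialising, whereas the paper works directly with $X=\nabla_\Sigma f$.
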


\begin{proof}
Indeed%
\begin{align*}
\frac{1}{2}\frac{\partial\phi}{\partial\nu}  & =D^{2}u\left(  \nabla
u,\nu\right)  +f\chi\\
& =D^{2}u\left(  \nabla_{\Sigma}u,\nu\right)  +\chi\left(  D^{2}u\left(
\nu,\nu\right)  +f\right) \\
& =D^{2}u\left(  \nabla_{\Sigma}f,\nu\right)  -H\chi^{2},
\end{align*}
here we have used (\ref{d2un}) in the last step. On the other hand%
\begin{align*}
D^{2}u\left(  \nabla_{\Sigma}f,\nu\right)   & =\left\langle \nabla
_{\nabla_{\Sigma}f}\nabla u,\nu\right\rangle \\
& =\nabla_{\Sigma}f\left\langle \nabla u,\nu\right\rangle -\left\langle \nabla
u,\nabla_{\nabla_{\Sigma}f}\nu\right\rangle \\
& =\left\langle \nabla_{\Sigma}f,\nabla_{\Sigma}\chi\right\rangle -\Pi\left(
\nabla_{\Sigma}f,\nabla_{\Sigma}f\right)  .
\end{align*}
The lemma follows.
\end{proof}

\begin{lemma}
\bigskip The function $\phi=\left\vert \nabla u\right\vert ^{2}+u^{2}$ is
constant and
\[
D^{2}u=-ug.
\]
Moreover $\chi=\frac{\partial u}{\partial\nu}$ is also constant and
$\Pi\left(  \nabla_{\Sigma}f,\nabla_{\Sigma}f\right)  \equiv0$.
\end{lemma}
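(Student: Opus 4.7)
Since $\phi=\left\vert \nabla u\right\vert^{2}+u^{2}$ is subharmonic by Lemma \ref{sub}, the strategy is to use the Hopf boundary point lemma to conclude that $\phi$ is constant on $M$, and then read off the three consequences from the Bochner chain and from Lemma \ref{nf}.

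By the weak maximum principle, $\phi$ attains its maximum at some $p_{0}\in\Sigma$ (otherwise $\phi$ is already constant by the strong maximum principle, and we are done). From the boundary identity (\ref{fb}), $\phi|_{\Sigma}=1+\chi^{2}$, so $p_{0}$ is also a maximum of $\chi^{2}$ on $\Sigma$. Differentiating, $\chi(p_{0})\,\nabla_{\Sigma}\chi(p_{0})=0$. Either $\nabla_{\Sigma}\chi(p_{0})=0$ directly, or else $\chi(p_{0})=0$; in the latter case $\chi^{2}$ attains its maximum at the value $0$, which forces $\chi\equiv 0$ on $\Sigma$, and in particular $\nabla_{\Sigma}\chi(p_{0})=0$. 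In either scenario, $\langle\nabla_{\Sigma}f,\nabla_{\Sigma}\chi\rangle(p_{0})=0$, and Lemma \ref{nf} reduces to
\[
\tfrac{1}{2}\,\partial_{\nu}\phi(p_{0})=-H\chi(p_{0})^{2}-\Pi(\nabla_{\Sigma}f,\nabla_{\Sigma}f)(p_{0})\leq 0,
\]
where we used $H\geq 0$ (implied by $\Pi\geq 0$) and $\Pi\geq 0$ (the convexity hypothesis). By the Hopf boundary point lemma, this rules out a strict interior inequality $\phi<\phi(p_{0})$, so $\phi$ must be constant on $M$.

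With $\Delta\phi\equiv 0$, the chain of inequalities in the proof of Lemma \ref{sub} must all be equalities. The Cauchy--Schwarz step $\left\vert D^{2}u\right\vert^{2}\geq(\Delta u)^{2}/n$ becomes equality exactly when $D^{2}u$ is a multiple of $g$, and combined with $\Delta u=-nu$ this gives $D^{2}u=-u\,g$ on $M$. Next, from $\phi\equiv$const on $M$ and $\phi|_{\Sigma}=1+\chi^{2}$, the function $\chi^{2}$ is constant on $\Sigma$; since $\Sigma\cong\mathbb{S}^{n-1}$ is connected and $\chi$ is continuous, $\chi$ itself is constant. Finally, $\partial_{\nu}\phi\equiv 0$ and $\nabla_{\Sigma}\chi\equiv 0$ plug into Lemma \ref{nf} to give
\[
H\chi^{2}+\Pi(\nabla_{\Sigma}f,\nabla_{\Sigma}f)\equiv 0
\]
on $\Sigma$; as both summands are nonnegative, each vanishes, and in particular $\Pi(\nabla_{\Sigma}f,\nabla_{\Sigma}f)\equiv 0$.

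\textbf{Main obstacle.} The only delicate point is the boundary-point argument: one needs the case split at $p_{0}$ (namely $\chi(p_{0})=0$ versus $\nabla_{\Sigma}\chi(p_{0})=0$) so that, regardless of which alternative holds, the cross term $\langle\nabla_{\Sigma}f,\nabla_{\Sigma}\chi\rangle$ drops out of $\partial_{\nu}\phi(p_{0})$ and the sign hypotheses $H\geq 0$ and $\Pi\geq 0$ can deliver $\partial_{\nu}\phi(p_{0})\leq 0$. Once that is in hand, the rest is purely algebraic bookkeeping with the Bochner equality and Lemma \ref{nf}.
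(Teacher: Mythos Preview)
Your proof is correct and follows essentially the same approach as the paper: locate the boundary maximum of the subharmonic $\phi$, use the identity $\phi|_\Sigma=1+\chi^2$ to kill the cross term $\langle\nabla_\Sigma f,\nabla_\Sigma\chi\rangle$ at that point (via the dichotomy $\chi(p_0)=0$ or $\nabla_\Sigma\chi(p_0)=0$), and then invoke Lemma~\ref{nf} together with convexity and the Hopf lemma to force $\phi$ constant. Your organization is slightly more streamlined than the paper's (you avoid splitting on whether $\partial_\nu\phi(p_0)$ is zero or positive), but the content is the same.
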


\begin{proof}
Since $\phi$ is subharmonic, by the maximum principle $\phi$ achieves its
maximum on $\Sigma$, say at $p\in\Sigma$. Obviously we have
\[
\nabla_{\Sigma}\phi\left(  p\right)  =0,\text{ \ }\frac{\partial\phi}%
{\partial\nu}\left(  p\right)  \geq0.
\]
If $\frac{\partial\phi}{\partial\nu}\left(  p\right)  =0$, then $\phi$ must be
constant by the strong maximum principle and Hopf lemma (see \cite[page
34-35]{GT}). Then the proof of Lemma \ref{sub} implies $D^{2}u=-ug$. By
(\ref{fb}) $\chi$ is constant. It then follows from Lemma \ref{nf} that
$\Pi\left(  \nabla_{\Sigma}f,\nabla_{\Sigma}f\right)  \equiv0$.

Suppose $\frac{\partial\phi}{\partial\nu}\left(  p\right)  >0$. Then
$\chi\left(  p\right)  \neq0$, for otherwise it follows from (\ref{fb}) that
$\chi\equiv0$ and hence $\frac{\partial\phi}{\partial\nu}\left(  p\right)
\leq0$ by Lemma \ref{nf}, a contradiction. From (\ref{fb}) we conclude
$\nabla_{\Sigma}\chi\left(  p\right)  =0$. By Lemma \ref{nf}
\[
\frac{1}{2}\frac{\partial\phi}{\partial\nu}\left(  p\right)  =\left\langle
\nabla_{\Sigma}f,\nabla_{\Sigma}\chi\right\rangle \left(  p\right)  -H\chi
^{2}-\Pi\left(  \nabla_{\Sigma}f,\nabla_{\Sigma}f\right)  \leq0,
\]
here we have used the assumption that $\Sigma$ is convex, i.e. $\Pi\geq0$.
This contradicts with $\frac{\partial\phi}{\partial\nu}\left(  p\right)  >0$ again.
\end{proof}

\bigskip

Recall $f$ depends on a unit vector $\alpha\in\mathbb{S}^{n-1}$. To indicate
the dependence on $\alpha$ we will add subscript $\alpha$ to all the
quantities. Since $\Pi\left(  \nabla_{\Sigma}f_{\alpha},\nabla_{\Sigma
}f_{\alpha}\right)  \equiv0$ on $\Sigma$ for any $\alpha\in\mathbb{S}^{n-1}$
and $\left\{  \nabla_{\Sigma}f_{\alpha}:\alpha\in\mathbb{S}^{n-1}\right\}  $
span the tangent bundle $T\Sigma$ we conclude that $\Sigma$ is totally
geodesic, i.e. $\Pi=0$.

We now claim that we can choose $\alpha$ such that $\chi_{\alpha}\equiv0$.
Indeed, $\alpha\rightarrow\chi_{\alpha}$ is a continuous function on
$\mathbb{S}^{n-1}$. Clearly $u_{-\alpha}=-u_{\alpha}$ and hence $\chi
_{-\alpha}=-\chi_{\alpha}$. Therefore by the intermediate value theorem there
exists some $\beta\in\mathbb{S}^{n-1}$ such that $\chi_{\beta}\equiv0$. With
this particular choice $f=f_{\beta},u=u_{\beta}$ we have%
\[
\left\{
\begin{array}
[c]{c}%
D^{2}u=-ug,\\
\frac{\partial u}{\partial\nu}\equiv0.
\end{array}
\right.
\]
There is $q\in\Sigma$ such that $f\left(  q\right)  =\max f=1$. Then
$\nabla_{\Sigma}f\left(  q\right)  =0$ and hence $\nabla u\left(  q\right)
=0$ as $\frac{\partial u}{\partial\nu}\left(  q\right)  =0$. For $X\in T_{q}M$
such that $\left\langle X,\nu\left(  q\right)  \right\rangle \leq0$ let
$\gamma_{X}$ be the geodesic with $\overset{\cdot}{\gamma}_{X}\left(
0\right)  =X$. Note that $\gamma_{X}$ lies in $\Sigma$ if $X$ is tangential to
$\Sigma$ since $\Sigma$ is totally geodesic. The function $U\left(  t\right)
=u\circ\gamma_{X}\left(  t\right)  $ then satisfies the following%
\[
\left\{
\begin{array}
[c]{c}%
\overset{\cdot\cdot}{U}\left(  t\right)  =-U,\\
U\left(  0\right)  =1,\\
\overset{\cdot}{U}\left(  0\right)  =0.
\end{array}
\right.
\]
Hence $U\left(  t\right)  =\cos t$. Because $\Sigma$ is totally geodesic,
every point may be connected to $q$ by a minimizing geodesic. Using the
geodesic polar coordinates $\left(  r,\xi\right)  \in\mathbb{R}^{+}%
\times\mathbb{S}_{+}^{n-1}$at $q$ we can write%
\[
g=dr^{2}+h_{r}%
\]
where $r$ is the distance function to $q$ and $h_{r}$ is $r$-family of metrics
on $\mathbb{S}_{+}^{n-1}$ with
\[
\lim_{r\rightarrow0}r^{-2}h_{r}=h_{0},
\]
here $h_{0}$ is the standard metric on $\mathbb{S}_{+}^{n-1}$. Then $u=\cos r
$. The equation $D^{2}u=-ug$ implies%
\[
\frac{\partial h_{r}}{\partial r}=2\frac{\cos r}{\sin r}h_{r}%
\]
which can be solved to give $h_{r}=\sin^{2}rh_{0}$. It follows that $\left(
M,g\right)  $ is isometric to $\mathbb{S}_{+}^{n}$. This implies $\lambda
_{1}\left(  M\right)  =n$ and contradicts with the assumption $\lambda
_{1}\left(  M\right)  >n$. Theorem \ref{main} follows.

\end{document}